\documentclass[12pt,twoside]{amsart}
\usepackage{geometry}
\geometry{a4paper,top=3cm,bottom=3cm,left=2.5cm,right=2.5cm}
\usepackage{amssymb,amsmath,amsthm, amscd, enumerate, mathrsfs}
\usepackage{graphicx, hhline}
\usepackage[all]{xy}

\usepackage[shortlabels]{enumitem}
\setlist[enumerate]{leftmargin=56pt,labelsep=
8pt,itemsep=4pt,label=\upshape{(\theequation.\arabic*)}}
\usepackage{mathtools}
\mathtoolsset{showonlyrefs}


\title{A remark on toric foliations} 
\author{Osamu Fujino and Hiroshi Sato} 
\date{2024/2/22, version 0.21}
\address{Department of 
Mathematics, Graduate School of Science, 
Kyoto University, Kyoto 606-8502, Japan}
\email{fujino@math.kyoto-u.ac.jp}
\address{Department of Applied Mathematics, 
Faculty of Sciences, Fukuoka University, 
8-19-1, Nanakuma, Jonan-ku, Fukuoka 814-0180, Japan}
\email{hirosato@fukuoka-u.ac.jp}
\keywords{toric varieties, foliations, extremal rays, 
projective space bundles}
\subjclass[2020]{Primary 14M25; Secondary 14E30, 32S65}


\newcommand{\Star}[0]{{\operatorname{Star}}}
\newcommand{\mult}[0]{{\operatorname{mult}}}
\newtheorem{thm}{Theorem}[section]
\newtheorem*{claim}{Claim}

\theoremstyle{definition}
\newtheorem{defn}[thm]{Definition}
\newtheorem*{ack}{Acknowledgments}

\makeatletter
    \renewcommand{\theequation}{%
    \thesection.\arabic{equation}}
    \@addtoreset{equation}{section}
\makeatother

\begin{document}

\begin{abstract}
If a toric foliation on a projective 
$\mathbb Q$-factorial toric variety has an extremal ray 
whose length is longer than the rank of the foliation, 
then the associated extremal contraction is a projective 
space bundle and the foliation is the relative tangent sheaf of 
the extremal contraction. 
\end{abstract}

\maketitle 

\tableofcontents 

\section{Introduction}\label{section1}

Let us start with the definition of {\em{foliations}} on 
normal algebraic varieties. 

\begin{defn}[Foliations and toric foliations]\label{definition1.1}
A {\em{foliation}} on a normal algebraic variety 
$X$ is a nonzero saturated subsheaf 
$\mathscr F\subset \mathscr T_X$ that is closed under the Lie bracket, 
where $\mathscr T_X$ is  the tangent 
sheaf of $X$. 
We note that the {\em{rank}} of the foliation $\mathscr F$ 
means the rank of the coherent sheaf $\mathscr F$. 

We further assume that $X$ is toric. 
Then a foliation $\mathscr F$ on $X$ is called {\em{toric}} if 
the sheaf $\mathscr F$ is torus equivariant. 
\end{defn}

The following result on toric foliations is a starting point of 
this paper. 

\begin{thm}[{see \cite{pang}}]\label{theorem1.2}
Let $X=X(\Sigma)$ be a $\mathbb Q$-factorial toric variety 
with its fan $\Sigma$ in the lattice $N\simeq \mathbb Z^n$. 
Then there exists a one-to-one correspondence 
between the set of toric foliations on $X$ and 
the set of complex vector subspaces $V\subset N_{\mathbb C}:=N\otimes 
_{\mathbb Z} \mathbb C\simeq \mathbb C^n$. 

Let $\mathscr F_V$ be the toric foliation associated to 
a complex vector subspace $V\subset N_{\mathbb C}$ 
$($here, we should remark that the 
rank of $\mathscr F_V$ is $\dim_\mathbb{C} V$$)$. 
Then 
\begin{equation}
K_{\mathscr F_V}:=-c_1(\mathscr F_V)=-\sum _{\rho\subset V}D_\rho 
\end{equation} holds, that is, the first Chern class of 
$\mathscr F_V$ is $\sum _{\rho\subset V}D_\rho$, 
where $D_\rho$ is the torus invariant prime divisor corresponding 
to the one-dimensional cone $\rho$ in $\Sigma$. 
In particular, we 
have 
\begin{equation}
K_{\mathscr F_V}=K_X+\sum _{\rho\not\subset V}D_\rho. 
\end{equation}
\end{thm}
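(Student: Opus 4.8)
The plan is to realize every toric foliation, and in particular $\mathscr{F}_V$, as the saturation inside $\mathscr{T}_X$ of a torus-equivariant \emph{free} subsheaf of the logarithmic tangent sheaf, and then to read off the first Chern class from the codimension-one places at which the saturation actually enlarges the sheaf.

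First I would use the standard identification $\mathscr{T}_X(-\log\partial X)\cong\mathcal{O}_X\otimes_{\mathbb{Z}}N$, valid for every toric variety, where $\partial X=\sum_{\rho}D_\rho$; concretely $v\in N$ is sent to the vector field of the one-parameter subgroup it generates, which on a smooth affine chart $U_\sigma$ with coordinates $x_1,\dots,x_n$ dual to the rays $v_1,\dots,v_n$ of $\sigma$ sends $v_i$ to $x_i\partial_{x_i}$. Restriction to the open torus $T$ gives $\mathscr{T}_X|_{T}\cong\mathcal{O}_{T}\otimes_{\mathbb{C}}N_{\mathbb{C}}$, whose fiber over $1$ is the abelian Lie algebra of invariant vector fields. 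Hence a torus-equivariant saturated subsheaf $\mathscr{F}\subset\mathscr{T}_X$ restricts on $T$ to $\mathcal{O}_{T}\otimes_{\mathbb{C}}V$ for a unique subspace $V\subset N_{\mathbb{C}}$; conversely, for $V\neq 0$ I set $\mathscr{G}_V:=\mathcal{O}_X\otimes_{\mathbb{C}}V\subset\mathscr{T}_X(-\log\partial X)\subset\mathscr{T}_X$ and let $\mathscr{F}_V$ be its saturation in $\mathscr{T}_X$. Since $[fv,gw]=f(v\!\cdot\!g)w-g(w\!\cdot\!f)v$ for $v,w\in N_{\mathbb{C}}$ (the bracket of invariant fields vanishes), $\mathscr{G}_V$ is closed under the Lie bracket, so $\mathscr{F}_V$ is again involutive, torus-equivariant, nonzero and of rank $\dim_{\mathbb{C}}V$; and two saturated subsheaves of $\mathscr{T}_X$ that agree on $T$ coincide, because a section of the torsion-free quotient $\mathscr{T}_X/\mathscr{F}_V$ vanishing on the dense open $T$ is zero. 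This gives the bijection together with the statement about the rank.

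For the Chern class I would compare $\mathscr{F}_V$ with $\mathscr{G}_V$. As $\mathscr{G}_V$ is a free $\mathcal{O}_X$-module, $\det\mathscr{G}_V=\mathcal{O}_X$, and the inclusion $\mathscr{G}_V\hookrightarrow\mathscr{F}_V$ of sheaves of equal rank $r=\dim_{\mathbb{C}}V$ induces an injection $\mathcal{O}_X=\det\mathscr{G}_V\hookrightarrow\det\mathscr{F}_V$, so $\det\mathscr{F}_V\cong\mathcal{O}_X(Z)$ with $Z=\sum_\rho\ell_\rho D_\rho$ effective and torus-invariant, where $\ell_\rho=\operatorname{length}_{\mathcal{O}_{X,\eta_\rho}}\bigl((\mathscr{F}_V/\mathscr{G}_V)_{\eta_\rho}\bigr)$ at the generic point $\eta_\rho$ of $D_\rho$. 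It then remains to compute $\ell_\rho$ in the DVR $R=\mathcal{O}_{X,\eta_\rho}$, in which $t=0$ cuts out $D_\rho$ and $U_\rho$ is smooth. Choosing a $\mathbb{Z}$-basis $v_\rho=e_1,e_2,\dots,e_n$ of $N$, the module $\mathscr{G}_V\otimes R$ is generated by the fields $\sum_i a_i x_i\partial_{x_i}$ with $(a_i)\in V$, and since $x_2,\dots,x_n$ are units in $R$ while $x_1=t$, after an automorphism of $\mathscr{T}_X\otimes R$ this module becomes $D\,(R\otimes_{\mathbb{C}}V)$ with $D=\operatorname{diag}(t,1,\dots,1)$. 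If $v_\rho=e_1\in V$, then writing $V=\mathbb{C}e_1\oplus V'$ with $V'=V\cap\langle e_2,\dots,e_n\rangle$ gives $\mathscr{G}_V\otimes R\cong R\,(t\partial_{x_1})\oplus(R\otimes V')$ while $\mathscr{F}_V\otimes R\cong R\partial_{x_1}\oplus(R\otimes V')$, so $\ell_\rho=1$. If $v_\rho=e_1\notin V$, I claim $D(R\otimes V)$ is already saturated, so $\ell_\rho=0$: if the first coordinate functional vanishes on $V$ this module equals $R\otimes V$, and otherwise one picks a basis $w^{(1)}=e_1+u,w^{(2)},\dots,w^{(r)}$ of $V$ with $u\in\langle e_2,\dots,e_n\rangle\setminus\{0\}$ (nonzero precisely because $e_1\notin V$) and the last $r-1$ vectors in $\langle e_2,\dots,e_n\rangle$, observes that $u,w^{(2)},\dots,w^{(r)}$ are linearly independent, and reduces modulo $t$ to see that $tv\in D(R\otimes V)$ forces $v\in D(R\otimes V)$. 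Thus $Z=\sum_{\rho\subset V}D_\rho$, which gives $K_{\mathscr{F}_V}=-c_1(\mathscr{F}_V)=-\sum_{\rho\subset V}D_\rho$; combining this with $-K_X=\sum_\rho D_\rho$ (a consequence of $\det\mathscr{T}_X(-\log\partial X)=\mathcal{O}_X$ and the length-one cokernel of $\mathscr{T}_X(-\log\partial X)\hookrightarrow\mathscr{T}_X$ along each $D_\rho$) yields $K_{\mathscr{F}_V}=K_X+\sum_{\rho\not\subset V}D_\rho$.

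The main obstacle is the local saturation computation — above all the claim that $D(R\otimes V)$ is saturated exactly when $e_1\notin V$, together with the linear-independence making the reduction modulo $t$ go through; everything else (the logarithmic tangent identification, the passage to the torus for the bijection, and the divisor bookkeeping) is formal. It is worth noting that $\mathbb{Q}$-factoriality plays essentially no role here: the comparison of determinants only uses the codimension-one points $\eta_\rho$, and each chart $U_\rho$ is smooth in any case.
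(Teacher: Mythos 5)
Your argument is correct. Note that the paper itself gives no proof of Theorem \ref{theorem1.2} and simply cites Pang's thesis; your route — realizing $\mathscr F_V$ as the saturation in $\mathscr T_X$ of the free subsheaf $\mathcal O_X\otimes_{\mathbb C}V\subset\mathscr T_X(-\log \partial X)$, identifying the subspace $V$ by restricting an equivariant saturated subsheaf to the open torus, and computing $c_1$ from the colength of $\mathscr G_V\subset\mathscr F_V$ at the generic point $\eta_\rho$ of each $D_\rho$ — is essentially the standard argument in that reference and in \cite{chang-chen}, \cite{wang}. The decisive local dichotomy ($\ell_\rho=1$ if $v_\rho\in V$, $\ell_\rho=0$ otherwise, via the linear independence of $u,w^{(2)},\ldots,w^{(r)}$ and reduction modulo $t$ in the DVR at $\eta_\rho$) and the observation that only the smooth charts $U_\rho$ matter are both handled correctly.
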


For the basics of toric foliations, see 
also \cite{chang-chen} and \cite{wang}. 
By \cite{fujino-sato}, we see that we can run the minimal model 
program with respect to $K_{\mathscr F}$ for any foliation $\mathscr F$ 
on a projective $\mathbb Q$-factorial toric variety $X$. 
For more details on the 
toric foliated minimal model program, see \cite{chang-chen} and 
\cite{wang}. In this paper, we establish: 

\begin{thm}[Main Theorem]\label{theorem1.3}
Let $X$ be a 
projective $\mathbb Q$-factorial 
toric variety and let $\mathscr F$ be a toric 
foliation of rank $r$ on $X$. 
Then 
\begin{equation}
l_{\mathscr F}(R):=\min _{[C]\in R}\{-K_{\mathscr F}\cdot C\}\leq r+1
\end{equation}
holds for every extremal ray $R$ of $\overline{\mathrm{NE}}(X)=
\mathrm{NE}(X)$. Moreover, 
if $l_{\mathscr F}(R)>r$ 
holds for some extremal ray $R$ of 
$\mathrm{NE}(X)$, then the contraction morphism 
$\varphi_R\colon X\to Y$ associated to 
$R$ is a $\mathbb P^r$-bundle over $Y$. 
In this case, $\mathscr F=\mathscr T_{X/Y}$ holds, where 
$\mathscr T_{X/Y}$ is the relative 
tangent sheaf of $\varphi_R\colon X\to Y$. 
In particular, $\mathscr F$ is locally free. 
\end{thm}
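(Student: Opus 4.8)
The strategy is to reduce everything to fan combinatorics via Theorem~\ref{theorem1.2} and then to read it off from the description of extremal contractions on projective $\mathbb Q$-factorial toric varieties (toric Mori theory). By Theorem~\ref{theorem1.2} I would write $\mathscr F=\mathscr F_V$ with $r=\dim_{\mathbb C}V$ and $-K_{\mathscr F}=\sum_{\rho\subset V}D_\rho$, so that $-K_{\mathscr F}\cdot C=\sum_{\rho\subset V}D_\rho\cdot C$ for every torus invariant curve $C$. Fix an extremal ray $R$ with contraction $\varphi_R\colon X\to Y$; we may assume $l_{\mathscr F}(R)>0$. As is standard, $l_{\mathscr F}(R)$ is computed by a minimal torus invariant extremal curve: there is a wall $\tau\in\Sigma$ ($\dim\tau=n-1$) lying in exactly two maximal cones $\sigma^{\pm}=\tau+\mathbb R_{\ge 0}v^{\pm}$ such that $C:=V(\tau)\cong\mathbb P^1$ generates the monoid $R\cap N_1(X)_{\mathbb Z}$ and $l_{\mathscr F}(R)=-K_{\mathscr F}\cdot C$. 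Let $\mathcal S$ be the set of rays of $\sigma^+\cup\sigma^-$ occurring with nonzero coefficient in the primitive wall relation $\sum_{\rho\in\mathcal S}b_\rho u_\rho=0$, normalised so that $b_{v^{+}},b_{v^{-}}>0$. Then $D_\rho\cdot C=\lambda b_\rho$ for a single $\lambda\in\mathbb Q_{>0}$ and all $\rho\in\mathcal S$, $D_\rho\cdot C=0$ otherwise, and at most one $\rho\in\mathcal S$ has $b_\rho<0$ (none if $\varphi_R$ is of fibre type). Hence
\begin{equation}\label{eq:plan-1}
l_{\mathscr F}(R)=\sum_{\rho\in\mathcal S,\ \rho\subset V}D_\rho\cdot C .
\end{equation}

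The step I expect to be the genuine obstacle is the estimate
\begin{equation}\label{eq:plan-key}
0<D_\rho\cdot C\le 1\qquad\text{for every }\rho\in\mathcal S\text{ with }b_\rho>0 .
\end{equation}
For $\rho\in\{\langle v^+\rangle,\langle v^-\rangle\}$ this is a one line lattice computation, from $D_{\langle v^{\pm}\rangle}\cdot C=\mult(\tau)/\mult(\sigma^{\pm})$ and $\mult(\sigma^{\pm})=|c^{\pm}|\,\mult(\tau)$ with $c^{\pm}\in\mathbb Z\setminus\{0\}$; the point of \eqref{eq:plan-key} is to carry it through for the rays of $\tau$ as well, and there one has to use that $C$ is the \emph{minimal} extremal curve — concretely, by restricting to the $\mathbb Q$-factorial toric Fano variety of Picard number one that $C$ generates inside $\varphi_R$ (its general fibre, or a fibre of $\varphi_R$ restricted to its exceptional locus when $\varphi_R$ is birational) and using that on such a variety the minimal rational curve meets every torus invariant prime divisor with multiplicity at most one. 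Granting \eqref{eq:plan-key}, discard from \eqref{eq:plan-1} the at most one (nonpositive) term with $b_\rho<0$ to obtain $l_{\mathscr F}(R)\le\#\{\rho\in\mathcal S:\rho\subset V\}$, and then argue by linear algebra: if some ray of $\mathcal S$ is not contained in $V$, then $\{u_\rho:\rho\in\mathcal S,\ \rho\subset V\}$ is a proper, hence linearly independent, subfamily of $\{u_\rho\}_{\rho\in\mathcal S}$ (the wall relation being the only linear relation among the latter), so this cardinality is $\le r$; whereas if $\mathcal S\subseteq V$, then $\operatorname{span}_{\mathbb C}\mathcal S\subseteq V$ has dimension $\#\mathcal S-1$, so $\#\mathcal S\le r+1$. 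Either way $l_{\mathscr F}(R)\le r+1$, which is the first assertion.

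Now suppose $l_{\mathscr F}(R)>r$. The dichotomy just used forces $\mathcal S\subseteq V$, and if some $b_\rho$ were negative then $l_{\mathscr F}(R)\le\#\mathcal S-1\le r$, a contradiction; so $\varphi_R$ is of fibre type and its general fibre $F$ is a $\mathbb Q$-factorial toric Fano variety of Picard number one whose rays are exactly $\mathcal S$, so $\dim F=\#\mathcal S-1\le r$. Since $\mathcal S\subseteq V$, \eqref{eq:plan-1} shows $l_{\mathscr F}(R)$ equals the anticanonical degree of a minimal curve of $F$, which is $>r\ge\dim F$; by the equality case of the anticanonical length bound on $F$ — an elementary consequence of \eqref{eq:plan-key} together with the rationality of $\lambda$ — this gives $F\cong\mathbb P^{\dim F}$ with $-K_F\cdot(\text{line})=\dim F+1$, and then $\dim F+1>r\ge\dim F$ forces $\dim F=r$. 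Thus $\varphi_R$ is a toric fibration with general fibre $\mathbb P^r$ and $l_{\mathscr F}(R)=r+1$; such a fibration is automatically a Zariski locally trivial $\mathbb P^r$-bundle, with ``vertical'' subspace $W:=\operatorname{span}_{\mathbb R}\mathcal S$ and $V=W\otimes_{\mathbb R}\mathbb C$ (the dimensions now agreeing). Finally, the toric foliation attached to the vertical subspace of a toric projective bundle is exactly its relative tangent sheaf, so $\mathscr F=\mathscr F_V=\mathscr T_{X/Y}$, which is locally free, being the relative tangent sheaf of a smooth morphism. The points that need real care are the multiplicity estimate \eqref{eq:plan-key} and the identification $\mathscr F_{W\otimes\mathbb C}=\mathscr T_{X/Y}$; both are combinatorial and can be checked locally over $Y$.
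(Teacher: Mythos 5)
Your global strategy coincides with the paper's: write $-K_{\mathscr F}=\sum_{\rho\subset V}D_\rho$, evaluate against a torus invariant curve through Reid's wall relation, and play the resulting count of rays against $\dim_{\mathbb C}V=r$. But the two steps you yourself flag as needing care are genuine gaps. First, your key estimate that $0<D_\rho\cdot C\le 1$ for every ray of the wall relation with positive coefficient is asserted, not proved, for the rays of the wall $\tau$ itself: you defer it to a restriction to the general fibre $F$ plus an unproved claim about minimal curves on Picard-number-one toric Fanos. The restriction step is itself unsound in general, because intersection numbers computed in $X$ need not agree with those computed on $F$ when the fibration is not locally trivial (in the example below, $D_\rho\cdot C=1/2$ in $X$ while the corresponding number on $F\cong\mathbb P^1$ is $1$). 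The paper avoids all of this with a move you are entitled to but do not use: since $l_{\mathscr F}(R)>r$ bounds \emph{every} invariant curve in $R$, one may replace $C$ by the curve of the reordered wall in which $a_1\le\cdots\le a_{n+1}$; by \cite[Proposition 14-1-5 (i)]{matsuki} this wall lies in $\Sigma$ and its curve lies in $R$, and for it $D_{v_i}\cdot C\le a_i/a_{n+1}\le 1$ is immediate because $\mult(W)$ divides the multiplicity of each adjacent maximal cone. (Your side remark that at most one coefficient $b_\rho$ is negative is false for small contractions, though your argument does not actually use it.)

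Second, ``such a fibration is automatically a Zariski locally trivial $\mathbb P^r$-bundle'' is not true, and this is precisely where the paper does most of its work. Take $N=\mathbb Z^2$ with rays $(2,-1)$, $(-2,1)$, $(0,1)$, $(0,-1)$ and the four obvious maximal cones (this is $(\mathbb P^1\times\mathbb P^1)/(\mathbb Z/2)$); projection to the first coordinate is a fibre-type extremal contraction to $\mathbb P^1$ whose wall relation is $(0,1)+(0,-1)=0$, with all nonzero coefficients equal to $1$ and general fibre $\mathbb P^1$, yet it is not a $\mathbb P^1$-bundle: the scheme-theoretic fibres over the two fixed points are doubled. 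What excludes this under the hypothesis $l_{\mathscr F}(R)>r$ is not the isomorphism type of the general fibre but the individual equalities $D_\rho\cdot C=1$; in the example they equal $1/2$, so $l_{\mathscr F}(R)=1=r$ and there is no conflict with the theorem. The paper extracts exactly these equalities from \eqref{equation3.3}, in the form $\mult(\mu_{i,j})=\mult(\sigma_i)$ for all $i\neq j$ (forcing first $a_{n-r+1}=\cdots=a_{n+1}=1$), and then converts them into the lattice splitting $N=N_{\langle v_{n-r+1},\ldots,v_n\rangle}\oplus N_{\langle v_1,\ldots,v_{n-r}\rangle}$, which is the actual content of the $\mathbb P^r$-bundle claim. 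You need to supply this multiplicity argument (or an equivalent) before the final identification $\mathscr F=\mathscr T_{X/Y}$, which does become routine once the bundle structure is in place.
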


We note that we call $l_{\mathscr F}(R)$ 
the {\em{length}} of 
an extremal ray $R$ with respect to the foliation $\mathscr F$. 
We will use Reid's description of the toric extremal 
contraction morphisms in \cite{reid} (see also \cite[Chapter 14]{matsuki}) 
for the proof of Theorem \ref{theorem1.3}. 
This paper can be seen as a continuation of \cite{fujino} (see 
also \cite{fujino2}). 

\begin{ack}\label{acknowledgments}
The first author was partially 
supported by JSPS KAKENHI Grant Numbers 
JP19H01787, JP20H00111, JP21H00974, JP21H04994. 
The second author was partially
supported by JSPS KAKENHI Grant Number 
JP18K03262. 
The authors thank the referee very much for 
many useful comments and suggestions. 
\end{ack}

\section{Preliminaries on toric varieties}\label{section2}

Let $N\simeq \mathbb Z^n$ be a lattice of rank $n$. 
A toric variety $X(\Sigma)$ is associated to a {\em{fan}} $\Sigma$, 
a collection of convex cones $\sigma\subset N_\mathbb R:=
N\otimes _{\mathbb Z}\mathbb R$ satisfying: 
\begin{itemize}
\item 
Each convex cone $\sigma$ is a 
rational polyhedral cone in the sense that  
there are finitely many 
$n_1, \ldots, n_s\in N\subset N_{\mathbb R}$ such 
that 
\begin{equation}
\sigma=\{r_1n_1+\cdots +r_sn_s; \ r_i\geq 0\}=
:\langle n_1, \ldots, n_s\rangle, 
\end{equation} 
and it is strongly convex in the sense that  
\begin{equation}
\sigma \cap -\sigma=\{0\}. 
\end{equation} 
\item Each face $\tau$ of a convex cone $\sigma\in \Sigma$ 
is again an element in $\Sigma$. 
\item The intersection of two cones in $\Sigma$ is a face of 
each. 
\end{itemize}
The {\em{dimension}} $\dim \sigma$ of a cone $\sigma$ is 
the dimension of the linear space 
$\mathbb R\sigma=\sigma +(-\sigma)$ spanned 
by $\sigma$. We define the sublattice $N_{\sigma}$ of 
$N$ generated (as a subgroup) by $\sigma\cap N$ as 
follows: 
\begin{equation}
N_{\sigma}:=\sigma\cap N+(-\sigma\cap N). 
\end{equation}
If $\sigma$ is a $k$-dimensional simplicial 
cone, and $v_1,\ldots, v_k$ are the 
first lattice points along the edges of $\sigma$, 
then $\sigma=\langle v_1, \ldots, v_k\rangle$ holds. 
The {\em{multiplicity}} of $\sigma$ is defined 
to be the {\em{index}} of the lattice 
generated by the $\{v_1, \ldots, v_k\}$ in the lattice $N_{\sigma}$; 
\begin{equation}
\mult (\sigma):=[N_{\sigma}:\mathbb Zv_1+\cdots +
\mathbb Zv_k]. 
\end{equation} 
We note that the affine toric variety 
$X(\sigma)$ associated to the cone $\sigma$ is smooth if and only 
if $\mult (\sigma)=1$. 
We also note that a toric variety $X(\Sigma)$ is $\mathbb Q$-factorial 
if and only if each cone $\sigma\in \Sigma$ is simplicial 
(see e.g.~\cite[Lemma 14-1-1]{matsuki}). 

The {\em{star}} of a cone $\tau\in\Sigma$ can be defined abstractly 
as the set of cones $\sigma$ in $\Sigma$ that 
contain $\tau$ as a face. Such cones $\sigma$ are 
determined by their images in 
$N(\tau):=N/{N_{\tau}}$, that is, by 
\begin{equation}
\overline \sigma:=\left(\sigma+(N_{\tau})_{\mathbb R}\right)/ 
(N_{\tau})_{\mathbb R}\subset N(\tau)_{\mathbb R}. 
\end{equation}  
These cones $\{\overline \sigma ; \tau\prec \sigma\}$ 
form a fan in $N(\tau)$, and we denote this fan by 
$\Star(\tau)$. 
We set $V(\tau)=X(\Star (\tau))$, that is, the toric variety associated to 
the fan $\Star (\tau)$. 
It is well known that $V(\tau)$ is an $(n-k)$-dimensional 
closed toric subvariety of $X(\Sigma)$, where $\dim \tau=k$. 
If $\dim V(\tau)=1$ (resp.~$n-1$), then we call $V(\tau)$ 
a {\em{torus invariant curve}} (resp.~{\em{torus invariant 
divisor}}). 
For the details about the correspondence between $\tau$ and 
$V(\tau)$, see \cite[3.1 Orbits]{fulton}. 

\section{Proof of Theorem \ref{theorem1.3}}\label{section3} 

In this section, we will prove Theorem \ref{theorem1.3}. 

\begin{proof}[Proof of Theorem \ref{theorem1.3}]
We assume that the toric variety $X$ is associated to a fan $\Sigma$, 
which is a collection of convex cones in $N\simeq \mathbb Z^n$ as explained 
in Section \ref{section2}. 
In particular, $\dim X=n$. 
It is well known that every extremal 
ray of $\overline{\mathrm{NE}}(X)=
\mathrm{NE}(X)$ is 
spanned by a torus invariant 
curve (see e.g.~\cite[Theorem 14-1-4]{matsuki}). 
Let $R$ be an extremal ray of $\mathrm{NE}(X)$. 
If $l_{\mathscr F}(R)\leq r$ holds, then there is nothing to prove. 
Therefore, we assume 
that $-K_{\mathscr F}\cdot C>r$ holds for every torus 
invariant curve $C$ with $[C]\in R$. 
We further assume that 
$C$ corresponds to an $(n-1)$-dimensional 
cone $W=\langle v_1, \ldots, v_{n-1}\rangle\in\Sigma$, 
where $v_1, \ldots, v_{n-1}$ are primitive vectors. 
Let $v_n, v_{n+1}\in N$ be the two primitive vectors 
such that they together with $W$ generate the 
two $n$-dimensional cones 
$\sigma,\sigma'\in\Sigma$, respectively. 
As usual, we can write 
\begin{equation}\label{equation3.1}
a_1v_1+\cdots +a_{n-1}v_{n-1}+a_nv_n+a_{n+1}v_{n+1}=0
\end{equation} 
such that $a_i$ is an integer for every $i$ with 
$\gcd(a_1, \ldots, a_{n+1})=1$ 
and $a_n,a_{n+1}>0$. 
We should remark that for a $1$-dimensional cone $\langle v\rangle\in\Sigma$, 
where $v\in N$ is a primitive vector, 
we have the following formula for the intersection number 
of $D_v:=V(\langle v\rangle)$ 
with $C$ 
(see e.g.~\cite[Proposition 6.4.4]{cls}): 
\[
D_v\cdot C=
\left\{
\begin{array}{ccl}
0 & \cdots & v\not\in\{v_1,\ldots,v_{n+1}\} \\
\displaystyle{\frac{a_i\mult(W)}{a_n\mult(\sigma)}} 
& \cdots & v=v_i\mbox{ for } 1\le i\le n \\
\displaystyle{\frac{\mult(W)}{\mult(\sigma')}} & \cdots & v=v_{n+1}
\end{array}
\right.
\] 
In this setting, 
\cite[Proposition 14-1-5 (i)]{matsuki} says that 
for $1\le i\le n-1$ with $a_i>0$, we have 
\begin{equation*}
\langle\{v_1,\ldots,v_n\}\setminus\{v_i\}\rangle\in\Sigma
\end{equation*} 
and 
\begin{equation*}
[V(\langle\{v_1,\ldots,v_n\}\setminus\{v_i\}\rangle)]\in R.
\end{equation*} 
Thus, we may assume that 
\begin{equation}
a_1\leq \cdots \leq a_n\leq a_{n+1} 
\end{equation} 
by changing the order. 
In particular, the above formula tells us that 
$D\cdot C\le 1$ for any torus invariant prime divisor $D$ on $X$. 
Since we have 
\begin{equation*}
-K_{\mathscr F}\cdot C=\sum_{v_i\in V}V(\langle v_i\rangle)\cdot C>r, 
\end{equation*} 
we obtain $1\le i_1<i_2<\cdots <i_r<i_{r+1}\le n+1$ such that 
\begin{equation}
v_{i_1}, v_{i_2}, \ldots, v_{i_r}, v_{i_{r+1}}\in V. 
\end{equation}
Since the rank of $\mathscr F$ is $r$, we obtain 
$\dim _{\mathbb C}V=r$ and 
\begin{equation}
\begin{split}
V&=\mathbb R\langle v_{i_1}, v_{i_2}, \ldots, v_{i_r}, v_{i_{r+1}}\rangle 
\otimes _{\mathbb R}\mathbb C\\
&=\mathbb R\langle v_{i_1}, v_{i_2},\ldots, v_{i_r}\rangle 
\otimes _{\mathbb R}\mathbb C. 
\end{split}
\end{equation} 
In particular, we have $v_i\not \in V$ for every 
$i\not\in \{i_1, i_2,\ldots, i_r, i_{r+1}\}$. 
Then $a_i=0$ holds in \eqref{equation3.1} for 
every $i\not \in \{i_1, i_2,\ldots, i_r, i_{r+1}\}$. 
Thus, $\{i_1, i_2,\ldots, i_r, i_{r+1}\}=
\{n-r+1, n-r+2, \ldots, n, n+1\}$ holds and \eqref{equation3.1} becomes 
\begin{equation}\label{equation3.2}
a_{n-r+1}v_{n-r+1}+\cdots +a_{n+1}v_{n+1}=0. 
\end{equation} 
We define $n$-dimensional cones 
\begin{equation}
\sigma_i:=\langle v_1,\ldots,v_{i-1},v_{i+1},\ldots,v_{n+1}\rangle
\in \Sigma
\end{equation}  
for $n-r+1\leq i\leq n+1$. 
We put $\mu_{i,j}=\sigma_i\cap \sigma_j
\in \Sigma$ for $i\ne j$. 
We note that 
\begin{equation}\label{equation3.3}
\begin{split}
r<-K_{\mathscr F}\cdot V(\mu_{k, n+1})&\leq 
\frac{1}{a_{n+1}}\left(\sum _{i=n-r+1}^{n+1} 
a_i\right) \frac{\mult (\mu_{k, n+1})}
{\mult (\sigma_k)}\\&\leq (r+1)\frac{\mult (\mu_{k, n+1})}
{\mult (\sigma_k)}
\end{split}
\end{equation} 
holds for every $n-r+1\leq k\leq n$. 
By definition, we know that 
\begin{equation}
\frac{\mult (\sigma_k)}{\mult (\mu _{k, n+1})}
\end{equation} 
is a positive integer. 
Hence \eqref{equation3.3} implies that 
\begin{equation}
\mult(\mu_{k, n+1})=\mult (\sigma_k)
\end{equation}
holds for every $n-r+1\leq k\leq n$. 
This means $v_{n+1}$ generates $N/N_{\mu_{k,n+1}}$. 
Therefore, 
for every 
$n-r+1\leq k\leq n$, 
the equality $a_kv_k+a_{n+1}v_{n+1}=0$ in $N/N_{\mu_{k,n+1}}$ 
tells us that 
$a_k$ divides $a_{n+1}$. 
By \eqref{equation3.3}, we obtain the following claim. 
Though the proof is completely similar to the proof of the claim in 
\cite[Proposition 2.9]{fujino}, we describe it for 
the sake of completeness. 
\begin{claim}
\[
a_{n-r+1}=\cdots =a_{n+1}=1. 
\]
\end{claim} 
\begin{proof}[Proof of Claim]
Suppose that $a_{n-r+1}\neq a_{n+1}$. 
Since 
\[
v_{n-r+1}=-\frac{1}{a_{n-r+1}}\sum_{i=n-r+2}^{n+1}a_iv_{n+1}
\]
is a primitive vector, $a_{n-r+2}\neq a_{n+1}$ also holds. Namely, 
\[
\frac{a_{n-r+1}}{a_{n+1}},\ \frac{a_{n-r+2}}{a_{n+1}}\leq\frac{1}{2},
\]
and this contradicts \eqref{equation3.3}. 
\end{proof}
Thus, \eqref{equation3.2} is nothing but 
\begin{equation}
v_{n-r+1}+\cdots +v_{n+1}=0. 
\end{equation}
Since this equality says that $v_i=-v_{n+1}$ in $N/N_{\mu_{i,n+1}}$ 
for every $n-r+1\le i\le n$, 
$v_i$ generates $N/N_{\mu_{i,n+1}}$, 
that is, 
we have an isomorphism 
\begin{equation}\label{equation3.4}
\mathbb Zv_i\overset{\sim}{\longrightarrow} N/N_{\mu_{i, n+1}}.
\end{equation} 
Let $v$ be any element of $N$. 
Then, by \eqref{equation3.4}, we can find 
$b_{n-r+1}, \ldots, b_n\in \mathbb Z$ such that 
\begin{equation}
v-(b_{n-r+1}v_{n-r+1}+\cdots +b_nv_n)\in N_{\langle 
v_1, \ldots, v_{n-r}\rangle}. 
\end{equation} 
This implies that 
$\{v_{n-r+1}, \ldots, v_{n}\}$ spans 
$N_{\langle v_{n-r+1}, \ldots, v_n\rangle}$ and 
that there exists 
a splitting $N=N_{\langle v_{n-r+1}, \ldots, v_n\rangle}
\oplus N_{\langle v_1, \ldots, v_{n-r}\rangle}$. 
The natural projection map 
\begin{equation*}
N\to N/N_{\langle v_{n-r+1}, \ldots, v_n\rangle}
\end{equation*} 
and the fan $\Sigma$ define a fan $\Sigma_Y$ in 
$N/N_{\langle v_{n-r+1}, \ldots, v_n\rangle}$. 
Then we obtain a toric extremal contraction morphism 
of fibering type 
\begin{equation*}
\varphi_R\colon X=X(\Sigma)\to Y:=Y(\Sigma_Y). 
\end{equation*} 
For the details of the above description of 
toric extremal contractions, 
see e.g.~\cite[Corollary 14-2-2]{matsuki}. 
Since $\{v_{n-r+1}, \ldots, v_{n}\}$ spans 
$N_{\langle v_{n-r+1}, \ldots, v_n\rangle}$, 
\begin{equation*}
v_{n-r+1}+\cdots +v_{n+1}=0, 
\end{equation*}  
and there exists 
a splitting 
\begin{equation*}
N=N_{\langle v_{n-r+1}, \ldots, v_n\rangle}
\oplus N_{\langle v_1, \ldots, v_{n-r}\rangle}, 
\end{equation*}  
the extremal contraction 
$\varphi_R\colon X\to Y$ is a $\mathbb P^r$-bundle 
(see e.g.~\cite[Exercise.~(Fiber bundles) on page 41]{fulton}). 
Hence, we can easily check that $\mathscr F=\mathscr T_{X/Y}$ 
(see e.g.~\cite[Proposition 3.1.6]{pang}) 
and 
that $l_{\mathscr F}(R)=r+1$ holds under the assumption that 
$l_{\mathscr F}(R)>r$. 
Thus we obtain all the desired properties. 
We finish the proof. 
\end{proof}



\begin{thebibliography}{CLS}

\bibitem[CC]{chang-chen}
C.-W.~Chang, Y.-A.~Chen, 
On toric foliations, preprint (2023). arXiv:2308.05053 [math.AG]

\bibitem[CLS]{cls}
D.~A.~Cox, J.~B.~Little, H.~K.~Schenck, 
{\em{Toric varieties}}, 
Graduate Studies in Mathematics, 
\textbf{124}. American Mathematical Society, Providence, RI, 2011.

\bibitem[Fj1]{fujino}
O.~Fujino, 
Notes on toric varieties from Mori theoretic viewpoint, 
Tohoku Math. J. (2) \textbf{55} (2003), no. 4, 551--564.

\bibitem[Fj2]{fujino2}
O.~Fujino, 
Equivariant completions of toric contraction morphisms, 
With an appendix by Fujino and Hiroshi Sato,  
Tohoku Math. J. (2) \textbf{58} (2006), no. 3, 303--321.

\bibitem[FjS]{fujino-sato} 
O.~Fujino, H.~Sato, 
Introduction to the toric Mori theory, 
Michigan Math. J. \textbf{52} (2004), no. 3, 649--665.

\bibitem[Fl]{fulton}
W.~Fulton, {\em{Introduction to toric varieties}}, 
Annals of Mathematics Studies, \textbf{131}. The William 
H.~Roever Lectures in Geometry. Princeton 
University Press, Princeton, NJ, 1993.

\bibitem[M]{matsuki} 
K.~Matsuki, {\em{Introduction to the Mori program}}, 
Universitext. Springer-Verlag, New York, 2002. 

\bibitem[P]{pang}
T.-S.~Pang, The Harder--Narasimhan filtrations 
and rational contractions, Ph.D. thesis, Universit\"at Freiburg, 2015.

\bibitem[R]{reid}
M.~Reid, 
Decomposition of toric morphisms, 
{\em{Arithmetic and geometry, Vol. II}}, 395--418, 
Progr. Math., \textbf{36}, Birkh\"auser Boston, Boston, MA, 1983. 

\bibitem[W]{wang}
W.~Wang, 
Toric foliated minimal model program, 
J. Algebra \textbf{632} (2023), 70--86. 

\end{thebibliography}
\end{document}